\newcommand{\prs}{\langle\;,\;\rangle}
\newcommand{\too}{\longrightarrow}
\newcommand{\esp}{\quad\mbox{and}\quad}
\newcommand{\G}{{\mathfrak{g}}}
\newcommand{\ad}{{\mathrm{ad}}}
\newcommand{\tr}{{\mathrm{tr}}}
\newcommand{\wi}{\widetilde}
\newcommand{\al}{\alpha}
\newcommand{\be}{\beta}
\newcommand{\e}{\epsilon}
\newcommand{\la}{\lambda}
\font\bb=msbm10
\def\R{\hbox{\bb R}}
\def\N{\hbox{\bb N}}
\def\C{\hbox{\bb C}}
\newtheorem{theo}{Theorem}[section]
\newtheorem{pr}{Proposition}[section]
\newtheorem{co}{Corollary}[section]
\begin{document}

\begin{frontmatter}


 

\title{ Nonunimodular Lorentzian flat Lie algebras}

 \author[Mohamed Boucetta]{Mohamed Boucetta}
 \address[Mohamed Boucetta]{Universit\'e Cadi-Ayyad\\
  Facult\'e des sciences et techniques\\
  BP 549 Marrakech Maroc
 }
 \author[Hicham Lebzioui]{Hicham Lebzioui\corref{mycorrespondingauthor}}
\address[Hicham Lebzioui]{Universit\'e Moulay Smail\\
 Facult\'e des sciences\\BP 11201 Zitoune, Mekn\`{e}s - Maroc
 }

\begin{abstract}A {\it Lorentzian flat Lie group} is a Lie group $G$ with a flat left invariant 
  metric $\mu$ with signature $(1,n-1)=(-,+,\ldots,+)$. The Lie algebra $\G=T_eG$ of $G$ endowed with 
$\prs=\mu(e)$ is called
{\it flat Lorentzian   Lie algebra}. It is known that the metric of a flat Lorentzian Lie group is geodesically complete if and only if its Lie algebra is unimodular. In this paper, we characterise nonunimodular Lorentzian flat Lie algebras as double extensions (in the sense of Aubert-Medina \cite{Aub-Med}) of Riemannian flat Lie algebras. As application of this result, we give all nonunimodular Lorentzian flat Lie algebras up to dimension 4. 

\end{abstract}
\begin{keyword}Lorentzian flat Lie algebras \sep Nonunimodular Lie algebras \sep Representations of solvable Lie algebras\sep Double extension 
\MSC 53C50 \sep \MSC 53D20 \sep \MSC 17B62\sep \MSC 16T25


\end{keyword}

\end{frontmatter}

\section{Introduction and main results}

A {\it pseudo-Riemannian  Lie group} is a Lie group $G$ with a left invariant 
pseudo-Riemannian  metric $\mu$. The Lie algebra $\G=T_eG$ of $G$ endowed with 
$\prs=\mu(e)$ is called
{\it pseudo-Riemannian   Lie algebra}. The Levi-Civita
connection of $(G,\mu)$ defines a product $(u,v)\mapsto u.v$ on $\G$ called \emph{Levi-Civita
product} given by  Koszul's formula
$$2\langle
u.v,w\rangle=\langle[u,v],w\rangle+\langle[w,u],v\rangle+\langle[w,v],u\rangle.
$$
For any $u\in\G$, we denote by $\mathrm{L}_u:\G\too\G$ and $\mathrm{R}_u:\G\too\G$,
respectively, the left multiplication and the right multiplication by $u$ given by
$\mathrm{L}_uv=u.v$ and $\mathrm{R}_uv=v.u.$ For any $u\in\G$, $\mathrm{L}_u$ is
skew-symmetric with
respect to $\prs$ and
$\ad_u=\mathrm{L}_u-\mathrm{R}_u,$ where
$\ad_u:\G\too\G$ is given by $\ad_uv=[u,v]$. 
The curvature of $\mu$ at $e$ is given by
$$\mathrm{K}(u,v)=\mathrm{L}_{[u,v]}-[\mathrm{L}_u,\mathrm{L}_v].$$
If $\mathrm{K}$ vanishes then   $(G,\mu)$ is called  \emph{pseudo-Riemannian flat Lie group} and $(\G,\prs)$ is called  \emph{pseudo-Riemannian flat Lie
algebra}.
If $\mu$ is geodesically complete then $(\G,\prs)$ is called \emph{ complete}.\\
 A pseudo-Riemannian flat Lie
algebra is
complete if and only if it is unimodular (see \cite{Aub-Med}). A Riemannian (resp. Lorentzian)  Lie group
is a pseudo-Riemannian  Lie group for which the metric is definite positive (resp. of
signature $(-,+\ldots+)$). In \cite{Milnor}, Milnor showed that a Riemannian Lie group is  flat
  if and only
if its Lie algebra is a semi-direct product of an abelian algebra $\mathfrak{b}$ with an
abelian ideal $\mathfrak{u}$ and, for any $u\in\mathfrak{b}$, $\ad_u$ is skew-symmetric. The determination of Lorentzian flat Lie groups is an open problem. A Lorentzian flat Lie algebra must be solvable (see
\cite{della}). In \cite{Aub-Med}, Aubert and Medina showed that nilpotent Lorentzian flat
Lie algebras are obtained by the double extension process from Riemannian abelian Lie
algebras. In \cite{gue}, Guediri studied Lie groups which may act isometrically and simply transitively on Minkowski space and get a precise description of nilpotent Lorentzian flat Lie groups.
 In \cite{ABL}, the authors  showed that a Lorentzian flat Lie
algebras with degenerate center can be obtained by the double extension process from Riemannian flat Lie
algebras.  In the first part of this paper, we show that  any nonunimodular Lorentzian flat Lie algebra is obtained by the double extension process from a Riemannian flat Lie algebra. In the second part, as application of this result, we determine all nonunimodular Lorentzian flat Lie algebras up to dimension 4.
Let us state our main result in a more precise way. To do so,
 we need to recall some basic material.
 \begin{itemize} 
 \item The double extension process  was described in \cite{Aub-Med}. In particular, Propositions
3.1-3.2 of 
\cite{Aub-Med} are essential in this process.
  Let
$(B,[\;,\;]_0,\prs_0)$ be a pseudo-Riemannian flat Lie algebra,
$\xi,D:B\too B$ two
endomorphisms of $B$, $b_0\in B$ and $\mu\in\R$ such that:
\begin{enumerate}
 \item \label{enu1}$\xi$ is a 1-cocycle of  $(B,[\;,\;]_0)$ with respect to the
representation $\mathrm{L}:B\too\mathrm{End}(B)$ defined by the left multiplication
associated to the Levi-Civita product, i.e., for any $a,b\in B$,
\begin{equation}\label{eq3}
 \xi([a,b])=\mathrm{L}_a\xi(b)-\mathrm{L}_b\xi(a),
\end{equation}

\item $D-\xi$ is skew-symmetric with respect to $\prs_0$, 
\begin{equation} \label{eq5}
[D,\xi]=\xi^2-\mu\xi-\mathrm{R}_{b_0},
\end{equation}and for any $a,b\in B$
\begin{equation} \label{eq6}
a.\xi(b)-\xi(a.b)=D(a).b+a.D(b)-D(a.b).
\end{equation}

\end{enumerate}
 We call  $(\xi,D,\mu,b_0)$ satisfying the two conditions above
\emph{admissible}.\\ Given $(\xi,D,\mu,b_0)$ admissible, 
we endow the vector space $\G=\R e\oplus B\oplus\R \bar{e}$  with the inner product
$\prs$ which extends $\prs_0$, for which $\mathrm{span}\{e,\bar{e}\}$ and $B$ are
orthogonal,
 $\langle e,e\rangle=\langle \bar{e},\bar{e}\rangle=0$ and $\langle e,\bar{e}\rangle=1$.
We define
also on
$\G$ the bracket 
\begin{equation}\label{bracket}[\bar{e},e]=\mu e,\; [\bar{e},a]=D(a)-\langle
b_0,a\rangle_0e\esp[a,b]=[a,b]_0+\langle(\xi-\xi^*)(a),b\rangle_0e,\end{equation}where
$a,b\in B$
and
$\xi^*$ is the adjoint of $\xi$ with respect to $\prs_0$. Then $(\G,[\;,\;],\prs)$ is a
pseudo-Riemannian flat Lie algebra  called \emph{double extension}
of $(B,[\;,\;]_0,\prs_0)$ according to $(\xi,D,\mu,b_0)$. \item
It was proven in \cite{ABL} that if $(B,[\;,\;],\prs_0)$ is a Riemannian flat Lie algebra then $B$ splits orthogonally
 \[ B=S(B)\oplus Z(B)\oplus[B,B], \]where $Z(B)$ is the center of $B$,
 $$ S(B)\oplus Z(B)=[B,B]^\perp=\{b\in B, \mathrm{R}_b=0\}=
 \{b\in B, \ad_b+\ad_b^*=0\}
 ,$$ and, for any $b\in Z(B)\oplus [B,B]$, $\mathrm{L}_b=0$.
 Moreover, $B.B=[B,B]$ and $\dim[B,B]$ is even.
\item 
 The \emph{modular vector} of a pseudo-Riemannian Lie algebra $(\G,\prs)$ is
the vector $\mathbf{h}\in\G$
given by
\begin{equation}\label{mean}\langle \mathbf{h},u\rangle=\mathrm{tr(\ad_u)}
=-\tr(\mathrm{R}_u),\;\forall
u\in\G.\end{equation}The
Lie algebra $\G$ is unimodular if and only if $\mathbf{h}=0$. Denote by $H=\mathrm{span}\{\mathbf{h} \}$ and $H^\perp$ its orthogonal with respect to $\prs$.\end{itemize}

We can now state our main result.
\begin{theo}\label{main} Let $(\G,\prs)$ be a nonunimodular Lorentzian flat Lie algebra. Then:
\begin{enumerate}\item [$(i)$] The left multiplication by $\mathbf{h}$ vanishes, i.e., $\mathrm{L}_{\mathbf{h}}=0$ and  both $H$ and $H^\perp$ are two-sided ideals with respect to the Levi-Civita product.  
\item [$(ii)$] $(\G,\prs)$ is obtained by the double extension process from a Riemannian flat Lie algebra $(B,[\;,\;]_0,\prs_0)$ according to $(\xi,D,\mu,b_0)$ with $\tr(D)\not=-\mu$.  
\end{enumerate}

\end{theo}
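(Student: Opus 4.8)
The plan is to prove $(i)$ first and then build the double extension in $(ii)$ on top of it. The starting point for $(i)$ is that, by flatness, $u\mapsto \mathrm{L}_u$ is a homomorphism of Lie algebras from $\G$ into $\mathfrak{so}(\prs)\cong\mathfrak{so}(1,n-1)$ (this is exactly $\mathrm{K}\equiv0$ together with the skew-symmetry of $\mathrm L_u$), and its image is \emph{solvable} because every Lorentzian flat Lie algebra is. I would then invoke the structure of solvable subalgebras of $\mathfrak{so}(1,n-1)$: such a subalgebra either fixes a timelike vector or leaves invariant an isotropic line. In the first case, if $\mathrm{L}_u\tau=0$ for all $u$ with $\langle\tau,\tau\rangle<0$, one checks that $\tau^{\perp}$ is an ideal which is a Riemannian flat Lie algebra for $\prs|_{\tau^\perp}$, that $\G=\R\tau\oplus\tau^\perp$ as Lie algebras, and — since Riemannian flat Lie algebras are unimodular by Milnor's description — a short computation of traces of $\ad$ forces $\mathbf{h}=0$, against the hypothesis. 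Hence $\mathrm{L}(\G)$ preserves an isotropic line $\R\ell$, say $\mathrm{L}_u\ell=\lambda(u)\ell$, where flatness makes $\lambda$ a character of $\G$. The heart of the whole argument is to upgrade this to: \emph{$\mathbf{h}$ is lightlike and $\mathbf{h}\in\R\ell$}. For this I would use the $\mathrm{L}(\G)$-invariant orthogonal decomposition of $\G$ coming from the same structure theory (a $(1,1)$-block containing $\ell$, and positive-definite blocks on which $\mathrm{L}(\G)$ acts by commuting rotations), expand $\mathbf{h}=\sum_i\varepsilon_i e_i.e_i$ blockwise, and exploit torsion-freeness $a.b-b.a=[a,b]$ together with the fact that every block-rotation angle is a character: evaluated on a pair of vectors spanning a definite invariant plane it forces that angle to vanish on the plane (killing the definite-block contribution to $\mathbf{h}$), and evaluated on $\ell$ and a dual null vector it forces $\lambda(\ell)\lambda(\bar\ell)=0$, so $\langle\mathbf{h},\mathbf{h}\rangle=0$ and $\mathbf{h}\in\R\ell$. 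This step is where I expect essentially all the difficulty to lie.

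Once $\mathbf{h}$ is lightlike with $\mathrm{L}_u\mathbf{h}=\lambda(u)\mathbf{h}$, the rest of $(i)$ follows by short manipulations. Because $\mathbf{h}\perp[\G,\G]$, the bilinear form $(x,y)\mapsto\langle\mathbf{h},x.y\rangle=-\lambda(x)\langle\mathbf{h},y\rangle$ is symmetric, hence $\lambda$ is proportional to $\langle\mathbf{h},\cdot\rangle$ and $\ker\lambda=H^\perp$ (the alternative $\lambda\equiv0$ would give, as above, $\mathbf{h}=0$); in particular $\lambda$ vanishes on $H^\perp$, and the identity $\langle x.y,\mathbf{h}\rangle=-\lambda(x)\langle y,\mathbf{h}\rangle$ then shows immediately that $H^\perp=\mathbf h^\perp$ is stable under the Levi-Civita product from both sides, i.e. is a two-sided ideal. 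For $\mathrm{L}_{\mathbf{h}}=0$: Koszul's formula and $\mathbf{h}\perp[\G,\G]$ give $\mathrm{L}_{\mathbf{h}}\mathbf{h}=0$ and $\mathrm{Im}(\mathrm{L}_{\mathbf{h}})\subseteq\mathbf{h}^\perp$; writing $\G=\R\mathbf{h}\oplus B\oplus\R\bar e$ with $\langle\mathbf h,\bar e\rangle=1$ and $B=\mathbf{h}^\perp\cap\bar e^\perp$ positive definite, skew-symmetry of $\mathrm L_{\mathbf h}$ reduces it to the data $A:=\mathrm{L}_{\mathbf h}|_B\in\mathfrak{so}(B)$ and $v_0:=\mathrm{L}_{\mathbf h}\bar e\in B$; the elementary identities $\ad_{\mathbf h}^{\,k}=\mathrm L_{\mathbf h}^{\,k}$ for $k\ge2$ force $A$ to be simultaneously semisimple and nilpotent, hence $A=0$ (this uses that $\ad_{\mathbf h}$ is nilpotent, i.e. $\mathbf h$ lies in the nilradical — a point needing a small separate argument from solvability and $\mathbf h\perp[\G,\G]$), and one last Koszul computation against $B$ gives $v_0=0$. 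Thus $\mathrm L_{\mathbf h}=0$; then $[u,\mathbf h]=\mathrm L_u\mathbf h-\mathrm L_{\mathbf h}u=\lambda(u)\mathbf h$, so $H=\R\mathbf h$ is a Lie ideal, and since $\mathbf h.\G=\mathrm L_{\mathbf h}(\G)=0$ and $\G.\mathbf h=\mathrm L_\cdot(\mathbf h)\subseteq\R\mathbf h$ it is a two-sided ideal for the Levi-Civita product, finishing $(i)$.

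For $(ii)$, take $e$ a suitable scalar multiple of $\mathbf{h}$, choose an isotropic $\bar e$ with $\langle e,\bar e\rangle=1$, set $B:=\mathbf h^\perp\cap\bar e^\perp$ with $\prs_0:=\prs|_B$ (positive definite). Using $(i)$ — that $\R e$ and $e^\perp=\R e\oplus B$ are two-sided ideals for the Levi-Civita product and $\mathrm L_e=0$ — one reads off from the bracket and the Levi-Civita product of $\G$ a bracket $[\;,\;]_0$ and its Levi-Civita product on $B$, the scalar $\mu$ from $[\bar e,e]=\mu e$, the endomorphism $D$ as the $B$-component of $\ad_{\bar e}|_B$ with $b_0\in B$ recorded by $\bar e.\bar e=-\mu\bar e+b_0$, and $\xi$ from the $B$-component of $\mathrm L_{\bar e}|_B$. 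One then verifies that $(B,[\;,\;]_0,\prs_0)$ is Riemannian flat and that $(\xi,D,\mu,b_0)$ is admissible, i.e. satisfies \eqref{eq3}, \eqref{eq5} and \eqref{eq6} — this is the computational core of this part, obtained by expanding the flatness identity and Koszul's formula for $\G$ in the splitting $\R e\oplus B\oplus\R\bar e$ — and that the double extension of $(B,[\;,\;]_0,\prs_0)$ along $(\xi,D,\mu,b_0)$ is exactly $(\G,[\;,\;],\prs)$. Finally, for any such double extension one computes directly from \eqref{bracket} that its modular vector is $\mathbf{h}=(\mu+\mathrm{tr}(D))\,e$; hence $\G$ is nonunimodular if and only if $\mathrm{tr}(D)\neq-\mu$, and since $\G$ was assumed nonunimodular this inequality holds, completing the proof.
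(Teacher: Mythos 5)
There is a genuine gap, and it sits exactly where you locate ``essentially all the difficulty.'' Your plan for showing that $\mathbf{h}$ is lightlike and proportional to $\ell$ rests on an $\mathrm{L}(\G)$-invariant orthogonal decomposition of $\G$ into a $(1,1)$-block containing $\ell$ plus positive-definite planes on which $\mathrm{L}(\G)$ acts by commuting rotations. That decomposition does not exist in general: a solvable subalgebra of $\mathfrak{so}(1,n-1)$ preserving an isotropic line can act indecomposably on a Lorentzian subspace of dimension $\geq 3$ (the parabolic/``null rotation'' case; see case~3 of Proposition~\ref{pr3} and case~$(iii)$ of the one-endomorphism proposition in Section~\ref{section3}, where $F(e)=0$, $F(f)=\alpha e$, $F(\bar e)=-\alpha f$ is skew-symmetric, preserves $\R e$, but admits no invariant dual null line and no splitting into a $(1,1)$-block plus definite blocks). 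This case genuinely occurs for nonunimodular Lorentzian flat Lie algebras --- the paper's ``Third situation, second case $\la(e)=0$'' is not ruled out but is precisely where $\mathrm{L}_{\mathbf{h}}=0$ has to be extracted --- and it is where the bulk of the paper's proof lives: one must work with the filtration $V_\la\subset V_\la^\perp\subset V$, the induced Euclidean representation on $V_\la^\perp/V_\la$, and the off-diagonal cocycles $a_x,b_i,c_i$ of \eqref{x}, \eqref{efi}, \eqref{bci}, then kill them using flatness (and, in the subcase $\la(e)\neq0$, derive a contradiction). Your blockwise computation of $\mathbf{h}=\sum_i\varepsilon_i e_i.e_i$ handles only the decomposable ``second situation'' of the paper, where it does agree with the paper's argument ($d\mu_i=0$ forces $\mu_i(s_i)=\mu_i(t_i)=0$, and $\la(e)\la(\bar e)=0$).

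Two smaller points. First, the identity $\ad_{\mathbf h}^{\,k}=\mathrm L_{\mathbf h}^{\,k}$ for $k\ge 2$ that you use to force $A=0$ does not follow from the stated facts: from \eqref{left} with $u=v=\mathbf{h}$ and $\mathbf{h}.\mathbf{h}=0$ one only gets $[\mathrm R_{\mathbf h},\mathrm L_{\mathbf h}]=\mathrm R_{\mathbf h}^2$, whence $\ad_{\mathbf h}^2=\mathrm L_{\mathbf h}^2-2\mathrm L_{\mathbf h}\mathrm R_{\mathbf h}$, not $\mathrm L_{\mathbf h}^2$; this step needs to be replaced (the paper instead obtains $\mathrm L_{\mathbf h}=0$ directly inside each of its case analyses). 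Second, for part $(ii)$ you propose to verify admissibility of $(\xi,D,\mu,b_0)$ by hand; this is legitimate but is exactly the content of Proposition 3.1 of Aubert--Medina, which the paper invokes once $H$ and $H^\perp$ are known to be two-sided ideals, so you would be reproving a cited result rather than using it. The final computation $\mathbf{h}=(\mu+\tr(D))e$ and the conclusion $\tr(D)\neq-\mu$ match the paper.
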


 The proof of Theorem \ref{main}  is given  in Section \ref{section4}. It is based on a property of the modular vector given in Proposition \ref{pr1}, and on the fact that a  Lorentzian representation of a solvable Lie algebra can be reduced in an useful way by virtue of Lie's Theorem (see \cite{knap} Theorem 1.25 pp. 42).  Section \ref{section3} is devoted to the study of Lorentzian representations of  solvable Lie algebras. In Section \ref{section5} we give all nonunimodular Lorentzian flat Lie algebras up to dimension 4.

\section{Lorentzian representations of solvable Lie algebras }\label{section3}

In this section, by using Lie's Theorem (see \cite{knap} Theorem 1.25 pp. 42), we derive some interesting results on
Euclidean and Lorentzian representations of solvable Lie algebras. Through this section, $\G$ is a real solvable Lie algebra. We fix an ordering on $\G^*$ and,
for any $\la\in\G^*$, we denote by $d\la$ the element of $\wedge^2\G^*$ given by $d\la(u,v)=-\la([u,v])$.

A \emph{pseudo-Euclidean  vector space } is  a real vector space  of finite dimension $n$
endowed with  a
nondegenerate symmetric inner product  of signature $(q,n-q)=(-,\ldots,-,+,\ldots,+)$.  When
the
signature is $(0,n)$
(resp. $(1,n-1)$) the space is called \emph{Euclidean} (resp. \emph{Lorentzian}). 
Let $(V,\prs)$ be a pseudo-Euclidean vector space whose signature is $(q,n-q)$, we denote by $\mathrm{so}(V)$ the Lie algebra of skew-symmetric endomorphisms of $(V,\prs)$. 
Let  $\rho:\G\too\mathrm{so}(V)$ be a representation of $\G$. For any $\la\in\G^*$, put $$V_\la=\{x\in V,\;\rho(u)x=\la(u)x\;\mbox{for all}\;u\in\G \}.$$The representation $\rho$  is called \emph{indecomposable} if $V$ does not contain any nondegenerate invariant vector subspace. 
\begin{pr}Let $\rho:\G\too\mathrm{so}(V)$ be  an indecomposable representation  on an Euclidean vector space. Then either $\dim V=1$ and $V=V_0$ or $\dim V=2$,  there exists $\la>0$ such that $d\la=0$ and, for any $u\in\G$, $\rho(u)^2=-\la(u)^2\mathrm{Id}_V$. Moreover, in the last case there exists an orthonormal basis $(e,f)$ of $V$ such that, for any $u\in\G$,
\[ \rho(u)(e)=\la(u)f\esp \rho(u)(f)=-\la(u)e, \]and any other orthonormal basis of $V$ satisfying these relations has the form $(\cos(\psi) e-\sin(\psi) f,\sin(\psi) e+\cos(\psi)f)$, $\psi\in\R$.
\label{pr2}

\end{pr}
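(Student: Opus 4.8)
The plan is to apply Lie's Theorem to the complexification of the representation. Since $\G$ is solvable, Lie's Theorem (\cite{knap} Theorem 1.25 pp. 42) provides a common eigenvector for $\rho^{\C}:\G\too\mathfrak{gl}(V^{\C})$, i.e.\ a nonzero $z\in V^{\C}$ and a weight $\mu\in(\G^{\C})^*$ with $\rho^{\C}(u)z=\mu(u)z$ for all $u\in\G$. Because each $\rho(u)$ is skew-symmetric on the Euclidean space $(V,\prs)$, its eigenvalues are purely imaginary, so $\mu=i\la$ for a real linear form $\la\in\G^*$; moreover $\mu$ vanishes on $[\G,\G]$ since the weight of a one-dimensional quotient representation kills the derived algebra, which is exactly $d\la=0$. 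First I would treat the case $\la=0$: then $z$ is a genuine real (up to scaling) eigenvector with eigenvalue $0$, so $\R z$ (or its real/imaginary parts) is a nondegenerate invariant subspace; indecomposability forces $V=\R z$, $\dim V=1$, and $V=V_0$.

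Next, assume $\la\neq 0$. Writing $z=e-if$ with $e,f\in V$, the relation $\rho^{\C}(u)z=i\la(u)z$ separates into $\rho(u)e=\la(u)f$ and $\rho(u)f=-\la(u)e$ for all $u\in\G$. I would then check that $W=\spa\{e,f\}$ is a nonzero $\G$-invariant subspace, and that it is nondegenerate: skew-symmetry gives $\langle\rho(u)e,e\rangle=0$, hence $\la(u)\langle f,e\rangle=0$, so $e\perp f$ once we pick $u$ with $\la(u)\neq0$; and $\langle\rho(u)e,f\rangle=-\langle e,\rho(u)f\rangle$ gives $\la(u)\|f\|^2=\la(u)\|e\|^2$, so $\|e\|=\|f\|\neq0$ (nonzero since $z\neq0$ and otherwise $W$ would be degenerate or trivial). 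Rescaling, $(e,f)$ is orthonormal. By indecomposability, $V=W$, so $\dim V=2$. Replacing $\la$ by $-\la$ if necessary (which amounts to swapping $e,f$, an orthonormal change of basis) we may take $\la>0$ in the sense of the fixed ordering on $\G^*$. From $\rho(u)e=\la(u)f$, $\rho(u)f=-\la(u)e$ one reads off $\rho(u)^2=-\la(u)^2\mathrm{Id}_V$ directly.

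Finally, for the uniqueness statement: any orthonormal basis $(e',f')$ of the oriented Euclidean plane $W$ with the same orientation is obtained from $(e,f)$ by a rotation, $e'=\cos(\psi)e-\sin(\psi)f$, $f'=\sin(\psi)e+\cos(\psi)f$ for some $\psi\in\R$; a short computation shows such a rotated pair again satisfies $\rho(u)e'=\la(u)f'$, $\rho(u)f'=-\la(u)e'$, while a basis of the opposite orientation would yield $\rho(u)e'=-\la(u)f'$, contradicting $\la>0$ being forced by the relations. I expect the only mildly delicate point to be the bookkeeping with the complexification — extracting the correct real forms $e,f$ from $z$ and checking that the sign ambiguity in $\la$ is absorbed by an orthonormal (orientation-reversing) change of basis — together with making the "$\la>0$" normalization precise relative to the chosen ordering on $\G^*$; everything else is a direct consequence of skew-symmetry plus Lie's Theorem.
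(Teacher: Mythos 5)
Your proof is correct and follows essentially the same route as the paper: complexify, apply Lie's Theorem to get a weight vector, and use skew-symmetry to pin down the invariant plane. The only (harmless) difference is that you obtain the purely imaginary form $\mu=i\la$ of the weight directly from the spectral properties of skew-symmetric operators, where the paper reaches the same conclusion ($\la_1=0$) via an explicit $3\times 3$ determinant computation on the Gram coefficients of the real and imaginary parts.
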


{\bf Proof.}
We consider the complexification $V^{\C}$ of $V$. Then $\rho$ extends to a representation $\rho^{\C}:\G\too \mathrm{End}_{\C}(V^{\C})$ by putting
$$\rho^{\C}(u)(a+\imath b)=\rho(u)(a)+\imath\rho(u)(b).$$
Since $\G$ is solvable then by virtue of Lie's Theorem there exists $\la_1+\imath \la_2:\G\too\C$ and $x+\imath  y\not=0$ such that for any $u\in\G$,
$$\rho^{\C}(u)(x+\imath y)=(\la_1(u)+\imath \la_2(u))(x+\imath y).$$
This is equivalent to
\begin{equation}\label{eq1} \rho(u)(x)=\la_1(u)x-\la_2(u)y\esp
\rho(u)(y)=\la_2(u)x+\la_1(u)y.  \end{equation}
From
\[ \langle\rho(u)x,x\rangle=\langle\rho(u)y,y\rangle=0
\esp \langle\rho(u)x,y\rangle=-\langle\rho(u)y,x\rangle \]we get
\begin{equation}\label{eq21} \left(\begin{array}{ccc}\la_1(u)&-\la_2(u)&0\\
0&\la_2(u)&\la_1(u)\\ \la_2(u)&2\la_1(u)&-\la_2(u)\end{array} \right)
\left(\begin{array}{c}\langle x,x\rangle\\\langle y,x\rangle\\\langle y,y\rangle \end{array}\right)=0. \end{equation}We distinguish two cases:
\begin{enumerate}\item[$(i)$] The vectors $x,y$ are linearly dependent, for instance $y=ax$ and $x\not=0$, then
\[ \rho(u)(x)=(\la_1(u)-a\la_2(u))x\esp
a\rho(u)(x)=(\la_2(u)+a\la_1(u))x.  \]
Then
\[ \la_2(u)+a\la_1(u)=a\la_1(u)-a^2\la_2(u), \] hence $\la_2=\la_1=0$ so $x\in V_{0}\not=\{0\}$. The representation being indecomposable implies that $\dim V=1$ and $V=V_0$.
\item[$(ii)$] The couple $(x,y)$ are linearly independent.
Since $\langle x,x\rangle\not=0$ then, from \eqref{eq21}, we get 
\[ \left|\begin{array}{ccc}\la_1(u)&-\la_2(u)&0\\
0&\la_2(u)&\la_1(u)\\ \la_2(u)&2\la_1(u)&-\la_2(u)\end{array} \right|=-2\la_1(u)
\left(\la_1(u)^2+\la_2(u)^2  \right)=0.\]
If $\la_1=\la_2=0$ then $\mathrm{span}\{x,y\}\subset V_0$ which impossible. If $\la_1=0$ and $\la_2\not=0$ then $\langle x,y\rangle=0$  and the restriction of $\prs$ to 
$\mathrm{span}\{x,y\}$ is  nondegenerate  and hence $V=\mathrm{span}\{x,y\}$. By using the fact that $\rho([u,v])=[\rho(u),\rho(v)]$, one can deduce that $d\la_2=0$ and the  proposition follows.\hfill$\square$
\end{enumerate}

\begin{co}\label{co1}Let $\rho:\G\too\mathrm{so}(V)$ be  a  representation  on an Euclidean vector space. Then $V$ splits orthogonally
$$V=\bigoplus_{i=1}^qE_i\oplus V_0,$$where $E_i$ is an invariant indecomposable 2-dimensional
vector space for $i=1,\ldots,q$. In particular, a solvable subalgebra of $\mathrm{so}(V)$ must be abelian.

\end{co}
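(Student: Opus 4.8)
The plan is to deduce everything from Proposition \ref{pr2} by induction on $\dim V$, the base case $\dim V\le 2$ being handled by that proposition (together with the trivial case $\dim V=0$).

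\medskip

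First I would record the elementary reduction step available in the Euclidean setting. If $W\subset V$ is a $\rho$-invariant subspace then, since each $\rho(u)$ is skew-symmetric, for $x\in W^\perp$ and $y\in W$ one has $\langle\rho(u)x,y\rangle=-\langle x,\rho(u)y\rangle=0$, so $W^\perp$ is $\rho$-invariant as well; and because $\prs$ is definite, $W$ is nondegenerate and $V=W\oplus W^\perp$ is an orthogonal direct sum of invariant subspaces. Hence, if $V$ is not indecomposable, it contains a proper nonzero invariant subspace and we may split and apply the induction hypothesis to both summands; if $V$ is indecomposable, Proposition \ref{pr2} says that either $\dim V=1$ with $V=V_0$, or $\dim V=2$ and $V$ is one of the $E_i$. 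Assembling the pieces produced by the induction yields an orthogonal decomposition $V=\bigoplus_{i=1}^q E_i\oplus\bigoplus_j L_j$ in which each $E_i$ is an invariant indecomposable $2$-dimensional subspace carrying a nonzero weight $\la_i$ with $\rho(u)^2=-\la_i(u)^2\mathrm{Id}$, and each $L_j$ is a line on which $\rho$ acts trivially; in particular $\bigoplus_j L_j\subseteq V_0$.

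\medskip

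Next I would upgrade this to $\bigoplus_j L_j=V_0$. The subspace $V_0$ is obviously $\rho$-invariant. Since for $u$ with $\la_i(u)\neq 0$ the endomorphism $\rho(u)|_{E_i}$ has square $-\la_i(u)^2\mathrm{Id}\neq 0$ and is therefore invertible on $E_i$, we get $V_0\cap E_i=0$. Writing an arbitrary $x\in V_0$ in the decomposition and using that the sum is direct and the $E_i$ invariant forces each $E_i$-component of $x$ to lie in $V_0\cap E_i=0$, so $x\in\bigoplus_j L_j$. This gives the stated form $V=\bigoplus_{i=1}^q E_i\oplus V_0$.

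\medskip

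For the final assertion, I would apply the splitting just obtained to the inclusion representation of a solvable subalgebra $\G\subseteq\mathrm{so}(V)$. On each $E_i$ fix the rotation generator $J_i$, namely the skew-symmetric endomorphism with $J_i(e)=f$, $J_i(f)=-e$ in the orthonormal basis provided by Proposition \ref{pr2}; since $\mathrm{so}(E_i)=\R J_i$ is one-dimensional, every $u\in\G$ satisfies $u|_{E_i}=\la_i(u)J_i$, while $u|_{V_0}=0$ by definition of $V_0$. Thus all elements of $\G$ are simultaneously block-diagonal with blocks in the commutative algebras $\R J_i$, whence $[u,v]=\bigoplus_i\la_i(u)\la_i(v)[J_i,J_i]=0$ for all $u,v\in\G$, i.e. $\G$ is abelian. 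I expect the only point requiring a little care to be the bookkeeping in the second paragraph — identifying the trivial part of the decomposition with $V_0$ itself rather than merely a subspace of it, which hinges on the weights $\la_i$ being genuinely nonzero; the rest is a routine induction.
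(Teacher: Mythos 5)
Your proof is correct and follows exactly the route the paper intends: the corollary is stated without proof as an immediate consequence of Proposition \ref{pr2}, via the standard observation that in the definite case every invariant subspace is nondegenerate and has an invariant orthogonal complement, so $V$ decomposes into indecomposable pieces classified by that proposition. Your extra care in identifying the trivial summand with $V_0$ (using that the weights $\la_i$ are nonzero) and in reading off commutativity from the blocks $\mathrm{so}(E_i)=\R J_i$ is exactly the bookkeeping the paper leaves implicit.
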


\begin{pr}\label{pr3}
Let $\rho:\G\too \mathrm{so}(V)$ be an indecomposable Lorentzian representation. Then one of the following cases occurs:
\begin{enumerate}\item $\dim V=1$ and $V=V_0$.
\item $\dim V=2$,  there exists $\la>0$ such that $d\la=0$ and   a basis $(e,\bar{e})$ of $V$ such that $\langle e,e\rangle=\langle \bar{e},\bar{e}\rangle=0$, $\langle e,\bar{e}\rangle=1$
and, for any $u\in\G$,
\[ \rho(u)e=\la(u){e}\esp\rho(u)\bar{e}=-\la(u)\bar{e}. \]
\item $\dim V\geq3$, there exists $\la\in\G^*$ such that $d\la=0$ and $V_\la$ is a totally isotropic one dimensional vector space. Moreover, for any $\mu\not=\la$, $V_\mu=\left\{0\right\}$.

\end{enumerate}

\end{pr}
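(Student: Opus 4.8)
The plan is to follow the strategy of Proposition~\ref{pr2}. Complexifying $V$ and applying Lie's Theorem to the solvable algebra $\G$ produces $\la_1,\la_2\in\G^*$ and a common eigenvector $x+\imath y\neq0$ of $\rho^{\C}$ with $\rho^{\C}(u)(x+\imath y)=(\la_1(u)+\imath\la_2(u))(x+\imath y)$, equivalently the relations \eqref{eq1}; skew-symmetry of each $\rho(u)$ then yields, exactly as in \eqref{eq21}, a homogeneous linear system satisfied by the triple $(\langle x,x\rangle,\langle x,y\rangle,\langle y,y\rangle)$ for every $u$, whose $3\times3$ matrix has determinant $-2\la_1(u)\big(\la_1(u)^2+\la_2(u)^2\big)$. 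The new feature in signature $(1,n-1)$ is that this triple may vanish, so I would branch on whether $x$ and $y$ are linearly dependent.

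\emph{Step 1: $x$ and $y$ are linearly dependent.} Suppose not. If the triple vanishes, $\spa\{x,y\}$ is a $2$-dimensional totally isotropic subspace, impossible in a Lorentzian space. If the triple is nonzero, the determinant above must vanish for all $u$, forcing $\la_1\equiv0$; feeding $\la_1=0$ back into the system leaves two cases. If $\la_2\not\equiv0$ then $\langle x,y\rangle=0$ and $\langle x,x\rangle=\langle y,y\rangle\neq0$, so $\spa\{x,y\}$ is a nondegenerate \emph{definite} $\rho$-invariant plane, and indecomposability gives $V=\spa\{x,y\}$, contradicting that $V$ is Lorentzian. If $\la_2\equiv0$ then $x,y\in V_0$, and since the triple is nonzero $\spa\{x,y\}\subseteq V_0$ contains a non-isotropic vector $w$; then $\R w$ is a nondegenerate invariant line, so $V=\R w$, of dimension $1<\dim\spa\{x,y\}$, again absurd. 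Hence $x,y$ are dependent, which (as in the first case of Proposition~\ref{pr2}) forces $\la_2=0$ and yields a genuine real common eigenvector $v\neq0$, $\rho(u)v=\la(u)v$ for all $u$, with $\la:=\la_1$. Evaluating $\rho([u,w])=[\rho(u),\rho(w)]$ on $v$ gives $d\la=0$, while $\langle\rho(u)v,v\rangle=0$ gives $\la(u)\langle v,v\rangle=0$ for all $u$.

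\emph{Step 2: the trichotomy.} If $\rho$ has a non-isotropic common eigenvector, then by $\la(u)\langle v,v\rangle=0$ its eigenvalue vanishes, so $V_0$ contains some $v$ with $\langle v,v\rangle\neq0$; then $\R v$ is a nondegenerate invariant line and indecomposability gives $V=\R v$, i.e.\ case $1$. Otherwise every common eigenvector is isotropic, and then $\dim V\geq2$ (a $1$-dimensional Lorentzian space is spanned by a non-isotropic vector, which is killed by $\rho$). If $\dim V=2$, then $V$ is a hyperbolic plane, $\mathrm{so}(V)$ is one-dimensional, and with $e:=v$ and a dual null vector $\bar e$ skew-symmetry forces $\rho(u)e=\la(u)e$, $\rho(u)\bar e=-\la(u)\bar e$ with $\la\neq0$ (were $\la=0$, $V=V_0$ would contain non-isotropic vectors, against the present case); swapping $e,\bar e$ if needed we may assume $\la>0$ in the fixed ordering on $\G^*$, which is case $2$. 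If $\dim V\geq3$: for $\mu\in\G^*$ with $V_\mu\neq\{0\}$, take $0\neq w\in V_\mu$, isotropic by the present case, and compare it with $v\in V_\la$ via $(\la+\mu)(u)\langle v,w\rangle=0$: either $\spa\{v,w\}$ is totally isotropic, hence a line, so $w\in\R v$ and $\mu=\la$, or $\mu=-\la$, in which case $\langle v,w\rangle\neq0$ makes $\spa\{v,w\}$ a nondegenerate invariant hyperbolic plane and forces $\dim V=2$, absurd; thus $V_\mu=\{0\}$ for $\mu\neq\la$. Finally, $2\la(u)\langle v_1,v_2\rangle=0$ for all $v_1,v_2\in V_\la$ (together with the fact that every vector of $V_\la$ is isotropic when $\la=0$) shows $V_\la$ is totally isotropic, hence exactly one-dimensional since it contains $v$: case $3$. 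Throughout, $d\la=0$ has already been recorded.

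The main obstacle is Step 1: over a Euclidean space the triple $(\langle x,x\rangle,\langle x,y\rangle,\langle y,y\rangle)$ is automatically nonzero, whereas here one must separately rule out $2$-dimensional totally isotropic subspaces, use the determinant identity to kill $\la_1$, and in each remaining sub-branch exhibit a nondegenerate invariant plane or line contradicting indecomposability; threading these Lorentzian-signature constraints through the case split is the delicate point. A minor additional nuisance is the borderline value $\mu=-\la$ in Step 2, which needs its own argument.
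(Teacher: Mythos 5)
Your proof is correct and follows essentially the same route as the paper's: Lie's Theorem applied to the complexification, the linear system \eqref{eq21} coming from skew-symmetry of $\rho(u)$, the dichotomy on linear dependence of $x$ and $y$, and elimination of the unwanted branches by exhibiting proper nondegenerate invariant subspaces. The only difference is organizational: the paper splits the endgame according to whether a second weight space $V_\mu$ ($\mu\not=\la$) is nonzero, while you split according to $\dim V$, but the underlying arguments are interchangeable.
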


{\bf Proof.} As in the proof of Proposition \ref{pr2}, by virtue of Lie's Theorem, there exists $\la_1,\la_2\in\G$ and $x,y\in V$, $(x,y)\not=(0,0)$,  satisfying \eqref{eq1}-\eqref{eq21}. We distinguish two cases:\begin{enumerate}
\item[$(a)$] The vectors $x,y$ are linearly dependent say $y=ax$ with $x\not=0$. From \eqref{eq1}-\eqref{eq21}, we get 
 $\la_2=0$ and, for any $u\in\G$, $\la_1(u)\langle x,x\rangle=0$. 
If $\langle x,x\rangle\not=0$ then $\dim V=1$,  $V=V_0$ and we are in the first case. \\ Suppose now that $\langle x,x\rangle=0$. If   $V_{\la_1}$ is non totally isotropic then it contains a non isotropic vector $z$ and hence $V=\mathrm{span}\{z\}$ which is impossible since $x\in V$. So
   $V_{\la_1}$ must be totally isotropic and hence  $ V_{\la_1}=\mathrm{span}\{x\}$.   We have then two situations. The first one is that there exists $\mu\not=\la_1$ such that $V_\mu=\mathrm{span}\{z\}$ is a totally isotropic one dimensional vector space.  From the relation $\langle\rho(u)x,z\rangle=-\langle\rho(u)z,x\rangle$ and $\langle x,z\rangle\not=0$, we deduce that
$\mu=-\la_1$. Then $\la_1\not=0$ and hence $V=V_{\la_1}\oplus V_\mu$ and we are in the second case.  The second situation is that, for any $\mu\not=\la_1$, $V_\mu=\{0\}$. In this case $\dim V\geq3$ and we are in the third case. Indeed,
if $\dim V=2$, choose an isotropic vector $\bar{x}$ such that $\langle x,\bar{x}\rangle=1$. It is easy to check that $\bar{x}\in V_{-\la_1}$ which is impossible.

\item[$(b)$] The vectors $x,y$ are linearly independent. Since $\mathrm{span}\{x,y\}$ cannot be totally isotropic, we can deduce from \eqref{eq21} that $\la_1=0$, $\la_2\not=0$,  
$\langle x,y\rangle=0$ and $\langle x,x\rangle=\langle y,y\rangle\not=0$. So 
$\mathrm{span}\{x,y\}$ is Euclidean nondegenerate invariant which is impossible.
\hfill$\square$
\end{enumerate}

Let us study the third case in Proposition \ref{pr3} more deeply. Let $\rho:\G\too \mathrm{so}(V)$ be an indecomposable Lorentzian representation with $\dim V\geq3$. Then
 there exists $\la\in\G^*$ such that $d\la=0$, $V_\la$ is a one dimensional totally isotropic subspace and, for any $\mu\not=\la$, $V_\mu=\{0\}$. The quotient
 $\wi V=V_\la^\perp/V_\la$ is an Euclidean vector space and $\rho$ induces a representation  $\wi\rho:\G\too\mathrm{so}(\wi V)$. So, according to Corollary \ref{co1},
\[ \wi V=\bigoplus_{i=1}^q\wi E_i\oplus \wi V_0. \]
Denote by $\pi: V_\la^\perp\too\wi V$ the natural projection and choose a generator of $V_\la$. De note by $E_i=\pi^{-1}(\wi E_i)$, for $i=1,\ldots,q$, and $E_0=\pi^{-1}(\wi V_0)$.\\
For any $x\in E_0$ there exists $a_x\in\G^*$ such that, for any $u\in\G$,
\begin{equation}\label{x} \rho(u)(x)=a_x(u)e. \end{equation}This defines a linear map $a:E_0\too\G^*$. Its kernel $\ker a$ is an invariant vector subspace so, since $V$ is indecomposable, 
\begin{equation} \label{ker}\ker a=\left\{\begin{array}{ccc}\{0\}&\mbox{if}&\la\not=0,\\
\R e&\mbox{if}&\la=0.\end{array}    \right. \end{equation}
For any $x\in E_0$ and any $u,v\in\G$,
\begin{eqnarray*} \rho([u,v])(x)&=&a_x([u,v])e\\&=&(\la(u)a_x(v)-\la(v)a_x(u))e. \end{eqnarray*}
 Thus, for any $x\in E_0$, 
 \begin{equation}\label{ax}
  da_x=a_x\wedge\la.
 \end{equation}
Fix $i=1,\ldots,q$ and, by using Proposition \ref{pr1}, choose a basis
  $(e,e_i,f_i)$ is a basis of $E_i$ such that, for any $u\in\G$,
  \begin{equation}\label{efi} \rho(u)(e)=\la(u)e,\;\rho(u)e_i=b_i(u)e+\la_i(u)f_i\esp
   \rho(u)f_i=c_i(u)e-\la_i(u)e_i.\end{equation}
    We have, for any $u,v\in\G$,
      \begin{eqnarray*}
      \rho([u,v])e_i&=&b_i([u,v])e+\la_i([u,v])f_i\\
      &=&\rho(u)(b_i(v)e+\la_i(v)f_i)-\rho(v)(b_i(u)e+\la_i(u)f_i)\\&=&(\la(u)b_i(v)-\la(v)b_i(u)+\la_i(v)c_i(u)-\la_i(u)
        c_i(v))e,
      \end{eqnarray*}so
      \[ b_i([u,v])=\la(u)b_i(v)-\la(v)b_i(u)+\la_i(v)c_i(u)-\la_i(u)
          c_i(v). \]
   In the same way, by computing $\rho([u,v])f_i$, we get
      \[ c_i([u,v])=\la(u)c_i(v)-\la(v)c_i(u)-\la_i(v)b_i(u)+\la_i(u)
          b_i(v). \]
    Thus
    \begin{equation}\label{bci} db_i= b_i\wedge\la+\la_i\wedge c_i\esp d c_i=c_i\wedge\la+b_i\wedge \la_i.  \end{equation}
    If $(e,g_i,h_i)$ is another basis satisfying \eqref{efi} then
   \[ g_i=\al e+\cos(\psi) e_i+\sin(\psi) f_i\esp h_i=\be e
   -\sin(\psi) e_i+\cos(\psi) f_i. \]
   So
   \begin{eqnarray*} \rho(u)(g_i)&=&(\al\la(u)-\be\la_i(u)+\cos(\psi) b_i(u)+\sin(\psi) c_i(u))e\\&&+\la_i(u)(\cos(\psi) f_i-\sin(\psi) e_i+\be e)\\
   &=&B_i(u)e+\la_i(u)h_i,\\ 
   \rho(u)(h_i)&=&(\be\la(u)+\al\la_i(u)-\sin(\psi) b_i(u)+\cos(\psi) c_i(u))e\\&&-\la_i(u)(\sin(\psi) f_i+\cos(\psi) e_i+\al e)\\
     &=&C_i(u)e-\la_i(u)g_i.
   \end{eqnarray*}So , for any $u\in\G$,
   \begin{equation}\label{changebc}
   \left(\begin{array}{c}B_i(u)\\C_i(u) \end{array}\right)= \left(\begin{array}{cc}\al&-\be\\\be&\al\end{array} \right)
   \left(\begin{array}{c}\la(u)\\\la_i(u) \end{array}\right)+\left(\begin{array}{cc} \cos(\psi)&\sin(\psi)\\
   -\sin(\psi)&\cos(\psi)\end{array}\right)
   \left(\begin{array}{c}b_i(u)\\c_i(u) \end{array}\right). \end{equation}
  The case when $\dim\G=1$ is useful and leads to the following proposition which is a part of the folklore.
  \begin{pr}Let $F$ be a skew-symmetric endomorphism of a Lorentzian vector space $L$. Then $L$ splits orthogonally $L=V\oplus E$, where $V$ is  nondegenerate Lorentzian invariant  and $E$ is nondegenerate Euclidean invariant and one of the following cases occurs:
    \begin{enumerate}\item[$(i)$] $\dim V=1$ and $V\subset\ker F$.
    \item[$(ii)$] $\dim V=2$ and there exists a basis $(e,\bar{e})$ of $V$ and $\al>0$ such that, $\langle e,e\rangle=\langle \bar{e},\bar{e}\rangle=0$, $\langle \bar{e},{e}\rangle=1$,  $F(e)=\al e$ and $F(\bar{e})=-\al\bar{e}$.
    \item[$(iii)$] $\dim V=3$ and there exists a basis $(e,\bar{e},f)$ of $V$ and $\al\not=0$ such that, $$\langle e,e\rangle=\langle \bar{e},\bar{e}\rangle=0,\;\langle \bar{e},{e}\rangle=\langle f,f\rangle=1,\;\langle e,f\rangle=\langle \bar{e},{f}\rangle=0,$$ $F(e)=0$, $F(f)=\alpha e$ and $F(\bar{e})=-\alpha f.$   \end{enumerate}

  \end{pr}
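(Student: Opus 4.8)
{\bf Proof.} The plan is to decompose $(L,F)$ into indecomposable pieces and read off the structure of the non--Euclidean piece from Proposition~\ref{pr3} applied to the one--dimensional (hence solvable) Lie algebra $\R$ represented by $t\mapsto tF$. So first I would decompose $L$ orthogonally into $F$--invariant indecomposable subspaces: whenever $L$ contains a nonzero proper nondegenerate $F$--invariant subspace $W$, its orthogonal $W^{\perp}$ is again $F$--invariant (because $F$ is skew--symmetric) and nondegenerate, and one iterates on $\dim L$. Since signatures add and $L$ has exactly one negative direction, exactly one of the resulting summands, call it $V$, is non--Euclidean, and it then carries a Lorentzian inner product; with $E$ the orthogonal sum of the remaining (Euclidean, possibly zero) summands one has $L=V\oplus E$, $E$ a nondegenerate Euclidean $F$--invariant subspace. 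Now $F|_{V}$ makes $V$ an indecomposable Lorentzian representation of $\R$, so Proposition~\ref{pr3} leaves three possibilities. If $\dim V=1$ then $F|_{V}=0$, so $V\subset\ker F$: case $(i)$. If $\dim V=2$ one gets a null basis $(e,\bar e)$ with $F(e)=\la e$, $F(\bar e)=-\la\bar e$, $\la\ne 0$; exchanging $e$ and $\bar e$ when $\la<0$ gives case $(ii)$. So everything reduces to the third possibility: $\dim V\ge 3$, $V_{\la}=\R e$ a totally isotropic line, $V_{\mu}=\{0\}$ for $\mu\ne\la$.

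In that case I would run the analysis written out right after Proposition~\ref{pr3}. For each index $i$, the $2$--dimensional piece $\wi E_{i}$ of the Euclidean space $\wi V=V_{\la}^{\perp}/V_{\la}$ has rotation speed $\la_{i}\ne 0$ (Proposition~\ref{pr2}), so the matrix with rows $(\la,-\la_{i})$ and $(\la_{i},\la)$ is invertible (its determinant is $\la^{2}+\la_{i}^{2}$); hence by \eqref{changebc} the basis $(e,e_{i},f_{i})$ of $E_{i}$ can be chosen with $b_{i}=c_{i}=0$, i.e.\ $F(e_{i})=\la_{i}f_{i}$ and $F(f_{i})=-\la_{i}e_{i}$. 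Then $\mathrm{span}\{e_{i},f_{i}\}$ is $F$--invariant; it is Euclidean and nondegenerate, because $\pi$ maps it isometrically onto $\wi E_{i}$, and it is proper, since $F$ is invertible on it whereas $e\in\ker F$. This contradicts the indecomposability of $V$, so $q=0$, i.e.\ $\wi V=\wi V_{0}$ and $F(V_{\la}^{\perp})\subset\R e$. Consequently $E_{0}=V_{\la}^{\perp}$, and the linear map $a:V_{\la}^{\perp}\to\R$ of \eqref{x} has, by \eqref{ker}, kernel $\{0\}$ if $\la\ne 0$ and $\R e$ if $\la=0$. In the first case $a$ would be injective, giving $\dim V-1\le 1$ and contradicting $\dim V\ge 3$; hence $\la=0$, so $\mathrm{rank}(a)=\dim V-2\le 1$ and therefore $\dim V=3$. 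I expect this step --- first eliminating the rotation blocks, then ruling out $\la\ne 0$, then the rank count --- to be the only genuine obstacle; the rest is linear--algebra bookkeeping.

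Finally, for $\dim V=3$ I would construct the basis of $(iii)$ directly. Here $\ker F=\R e=\mathrm{Im}(F^{2})$ with $e$ isotropic, so $F$ is a single nilpotent Jordan block ($F^{3}=0\ne F^{2}$). Choosing $u$ with $F^{2}u\ne 0$, rescaling so that $F^{2}u=e$, and putting $w=Fu$, skew--symmetry gives $\langle e,e\rangle=\langle e,w\rangle=\langle w,u\rangle=0$ and $\langle e,u\rangle=-\langle w,w\rangle$, while nondegeneracy forces $\langle w,w\rangle\ne 0$; since $\R w$ is orthogonal to $\mathrm{span}\{e,u\}$, which already carries signature $(1,1)$, the Lorentzian signature of $V$ forces $\langle w,w\rangle>0$. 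Then $f=w/\sqrt{\langle w,w\rangle}$ satisfies $\langle f,f\rangle=1$ and $F(f)=\al e$ with $\al=\langle w,w\rangle^{-1/2}>0$, and replacing $u$ by $-u/\langle w,w\rangle$, corrected by the unique multiple of $e$ that makes it isotropic, produces $\bar e$ with $\langle\bar e,e\rangle=1$, $\langle\bar e,\bar e\rangle=0$, $\langle\bar e,f\rangle=0$ and $F(\bar e)=-\al f$. Together with $\langle e,f\rangle=0$ and $F(e)=0$ this is exactly the normalised null basis of case $(iii)$, which completes the proof.
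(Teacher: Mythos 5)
Your proof is correct and follows essentially the same route as the paper's: decompose $L$ into $V\oplus E$ with $V$ indecomposable Lorentzian, apply Proposition \ref{pr3} to the one-dimensional Lie algebra spanned by $F$, and in the case $\dim V\geq 3$ use \eqref{changebc} to absorb the rotation blocks and \eqref{ker} together with the rank of $a$ to force $\la=0$ and $\dim V=3$. The only difference is that you spell out the explicit Jordan-block construction of the null basis in case $(iii)$, which the paper leaves implicit; that added detail is sound.
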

  
  {\bf Proof.} Consider the one dimensional Lie algebra $\G$ spanned by $F$. We have obviously $L=V\oplus E$ where $V$ is nondegenerate Lorentzian invariant indecomposable and $E$ is Euclidean nondegenerate invariant. By applying Proposition \ref{pr3} to the representation of $\G$ on $V$, we get obviously $(i)$ and $(ii)$. Suppose now that $\dim V\geq3$. Then
   there exists $\la\in\G^*$ such that $d\la=0$, $V_\la$ is a one dimensional totally isotropic subspace and, for any $\mu\not=\la$, $V_\mu=\{0\}$. We adopt the notations used in the study above. For any $i=1,\ldots,q$,  since $\la_i(F)\not=0$, by using \eqref{changebc}, we can choose a basis $(e,e_i,f_i)$ of $E_i$ such that $b_i(e)=c_i(e)=0$ and hence $\mathrm{span}\{e_i,f_i\}$ is nondegenerate invariant. Thus $V_\la^\perp=E_0$. Now consider the endomorphism $a:E_0\too\G^*$. If $\la\not=0$ then, according to \eqref{ker}, $\dim E_0=1$ and hence $\dim V=2$ which is impossible. So $\la=0$ and hence $\dim E_0=2$ which establishes $(iii)$.\hfill$\square$

\section{Proof of Theorem \ref{main}}\label{section4}
Before to prove Theorem \ref{main}, we establish an important property of the modular vector of a {pseudo-Riemannian flat Lie algebra} which will be crucial in the proof.\\
Let $(\G,\prs)$ be a {pseudo-Riemannian flat Lie algebra}. The vanishing of the curvature is equivalent to the fact that $\G$ endowed with the Levi-Civita product
 is a left symmetric
 algebra, i.e., for any $u,v,w\in\G$,
 $$\mathrm{ass}(u,v,w)=\mathrm{ass}(v,u,w),$$where
 $\mathrm{ass}(u,v,w)=(u.v).w-u.(v.w).$ This relation is equivalent to
 \begin{equation}\label{left}\mathrm{R}_{u.v}-\mathrm{R}_v\circ\mathrm{R}_u=[\mathrm{L}_u,
 \mathrm{R}_v
 ],\end{equation}for any $u,v\in\G$. On the other hand,  one can see easily that the
 orthogonal of
 the derived ideal of $\G$ is given by
 \begin{equation}
 \label{eq7}[\G,\G]^\perp=\{u\in\G,\mathrm{R}_u=\mathrm{R}_u^*\}.
 \end{equation}
 
 This proposition appeared first in \cite{ABL}.
 
 \begin{pr}\label{pr1}
  Let  $(\G,[\;,\;],\prs)$ be a
 pseudo-Riemannian flat Lie algebra. Then the modular vector satisfies $\mathbf{h}\in[\G,\G]\cap[\G,\G]^\perp$ and 
 $\mathrm{R}_{\mathbf{h}}$ is symmetric with respect to $\prs$.
 In
 particular, if $\G$ is nonunimodular then $[\G,\G]$ is degenerate and $\langle
 \mathbf{h},\mathbf{h}\rangle=0$. 
 
 \end{pr}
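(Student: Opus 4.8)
The plan is to argue entirely inside the left-symmetric algebra $(\G,.)$, using only that each $\mathrm{L}_u$ lies in $\mathrm{so}(\G,\prs)$, the flatness identity \eqref{left}, the description \eqref{eq7} of $[\G,\G]^\perp$, and the defining relation $\langle\mathbf{h},u\rangle=-\tr(\mathrm{R}_u)$.

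First I would identify the operator $\mathrm{R}_{\mathbf{h}}$. Taking the trace in \eqref{left} kills the commutator, and since $\mathrm{R}_v\mathrm{R}_u$ has the same trace as $\mathrm{R}_u\mathrm{R}_v$, one gets $\tr(\mathrm{R}_{u.v})=\tr(\mathrm{R}_u\mathrm{R}_v)$ for all $u,v$. On the other hand, since $\mathrm{L}_u$ is skew-symmetric, $\langle\mathbf{h},u.v\rangle=\langle\mathbf{h},\mathrm{L}_uv\rangle=-\langle\mathrm{L}_u\mathbf{h},v\rangle=-\langle\mathrm{R}_{\mathbf{h}}u,v\rangle$, while by definition $\langle\mathbf{h},u.v\rangle=-\tr(\mathrm{R}_{u.v})$. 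Comparing the two gives $\langle\mathrm{R}_{\mathbf{h}}u,v\rangle=\tr(\mathrm{R}_u\mathrm{R}_v)$, an expression symmetric in $u$ and $v$; hence $\mathrm{R}_{\mathbf{h}}$ is $\prs$-symmetric, which by \eqref{eq7} is the same as $\mathbf{h}\in[\G,\G]^\perp$ (an inclusion that holds in any case, since $u\mapsto\tr(\ad_u)$ vanishes on commutators).

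The substance of the proposition is the inclusion $\mathbf{h}\in[\G,\G]$, and for this it is enough to show $\langle\mathbf{h},w\rangle=\tr(\mathrm{R}_w)=0$ for every $w\in[\G,\G]^\perp$; then $\mathbf{h}\perp[\G,\G]^\perp$, i.e. $\mathbf{h}\in[\G,\G]$. Fix such a $w$. Koszul's formula at $(w,w,x)$ reduces to $2\langle w.w,x\rangle=2\langle[x,w],w\rangle$, which vanishes because $[x,w]\in[\G,\G]$ while $w\perp[\G,\G]$; hence $w.w=0$. Setting $u=v=w$ in \eqref{left} then yields $\mathrm{R}_w^2=[\mathrm{R}_w,\mathrm{L}_w]$. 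Therefore, for every $k\ge 2$,
\[ \tr(\mathrm{R}_w^{k})=\tr\left(\mathrm{R}_w^{k-2}[\mathrm{R}_w,\mathrm{L}_w]\right)=\tr(\mathrm{R}_w^{k-1}\mathrm{L}_w)-\tr(\mathrm{R}_w^{k-1}\mathrm{L}_w)=0, \]
using only cyclicity of the trace. Since all power traces $\tr(\mathrm{R}_w^{k})$ with $k\ge 2$ vanish, Newton's identities force $\tr(\mathrm{R}_w)=0$ as well; equivalently $\mathrm{R}_w$ is nilpotent, which one may also read off from the fact that $\mathrm{R}_w^2$ lies in the derived algebra of the (metabelian, hence solvable) algebra spanned by $\mathrm{L}_w$ and the powers $\mathrm{R}_w^{k}$.

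Combining the two inclusions, $\mathbf{h}\in[\G,\G]\cap[\G,\G]^\perp$, i.e. $\mathbf{h}$ belongs to the radical of the restriction of $\prs$ to $[\G,\G]$; in particular $\langle\mathbf{h},\mathbf{h}\rangle=0$. If moreover $\G$ is nonunimodular then $\mathbf{h}\neq 0$, so this radical is nontrivial and $\prs$ restricted to $[\G,\G]$ is degenerate. The main obstacle is precisely the middle step: in the Riemannian setting one has the far stronger statement $\mathrm{R}_w=0$ for $w\in[\G,\G]^\perp$, whereas here only $w.w=0$ is available, and the whole difficulty is to squeeze $\tr(\mathrm{R}_w)=0$ out of $w.w=0$ — which is what the identity $\mathrm{R}_w^2=[\mathrm{R}_w,\mathrm{L}_w]$ and the vanishing of all higher power traces accomplish.
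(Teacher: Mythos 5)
Your proof is correct and follows essentially the same route as the paper's: both hinge on showing $w.w=0$ for $w\in[\G,\G]^\perp$, deducing $\mathrm{R}_w^2=[\mathrm{R}_w,\mathrm{L}_w]$ from \eqref{left}, and killing all power traces $\tr(\mathrm{R}_w^k)$, $k\ge 2$, to force $\tr(\mathrm{R}_w)=0$ and hence $\mathbf{h}\in[\G,\G]$. The only cosmetic differences are that the paper obtains $w.w=0$ from the symmetry of $\mathrm{R}_w$ rather than from Koszul's formula, and reaches the vanishing of the power traces via the induction $[\mathrm{R}_w^k,\mathrm{L}_w]=k\mathrm{R}_w^{k+1}$ where you invoke cyclicity of the trace.
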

 
 {\bf Proof.}  For any $u\in[\G,\G]^\perp$ and any $v\in\G$, since $\mathrm{R}_u$ is symmetric, we have
 $ \langle u.u,v\rangle=\langle u,v.u\rangle=0, $ and hence $u.u=0$.
  So, by virtue of \eqref{left}, we get
 $[\mathrm{R}_u,\mathrm{L}_u]=\mathrm{R}_u^{2}.$
 One can deduce by induction that, for any $k\in\N^*$,
 $[\mathrm{R}_u^k,\mathrm{L}_u]=k\mathrm{R}_u^{k+1},$ and hence
 $\tr(\mathrm{R}_u^{k})=0$
 for any $k\geq2$ which implies that $\mathrm{R}_u$ is nilpotent.
 Since, for any $u,v\in\G$, $\tr(\ad_{[u,v]})=0$, we deduce that
 $\mathbf{h}\in[\G,\G]^\perp$.
 Now, for any $u\in[\G,\G]^\perp$, $\mathrm{R}_u$ is nilpotent and hence
 $$\tr(\ad_u)=-\tr(\mathrm{R}_u)=\langle \mathbf{h},u\rangle=0,$$which implies
 $\mathbf{h}\in[\G,\G]$.\hfill $\square$\\
\subsection{Proof of Theorem \ref{main}} 
  We begin by proving the theorem's first assertion.\\
 Let $(\G,\prs)$ be a nonunimodular Lorentzian flat Lie algebra. According to \cite{della} Corollary 3.6, $\G$ must be solvable.  The left multiplication $\mathrm{L}:\G\too\mathrm{so}(\G)$ is a representation and hence
 \[ \G=\mathfrak{h}\oplus\mathfrak{k}, \]
 where $\mathfrak{h}$ is $L$-invariant Lorentzian nondegenerate indecomposable and 
 $\mathfrak{k}$ is $L$-invariant Euclidean nondegenerate. It is obvious that ${\mathfrak{k}}$ is a Riemannian flat Lie algebra and hence it is unimodular. Moreover,
   it is easy to see that  $\mathbf{h}\in \mathfrak{h}$ and it coincides with the modular vector of $\mathfrak{h}$. Let us show that $\mathrm{L}_{\mathbf{h}}(\mathfrak{k})=0$. Indeed, according to Proposition
\ref{pr2} and Corollary \ref{co1}, \begin{equation}\label{k} 
\mathfrak{k}=\bigoplus_{i=1}^p\mathfrak{k}_i\oplus \mathfrak{k}_0,\end{equation} and for any
$i=1,\ldots,p$, there exists $\mu_i>0$ with $d\mu_i=0$ and an orthonormal basis $(s_i,t_i)$ of $\mathfrak{k}_i$ such that, for any $u\in\G$,
\begin{equation}\label{st} u.s_i=\mu_i(u)t_i\esp u.t_i=-\mu_i(u)s_i. \end{equation}
Or, by virtue of Proposition \ref{pr1}, $\mathbf{h}\in[\G,\G]$ so $\mu_i(\mathbf{h})=0$ and hence $\mathbf{h}.s_i=\mathbf{h}.t_i=0$.\\
Now, according to Proposition \ref{pr3}, we have three situations.\\
   {\bf First situation.}
 In this case $\mathfrak{h}=\R e$ with $\langle e,e\rangle<0$ and for any $u\in\G$ $u.e=0$. Then $\mathfrak{h}$ is unimodular which is impossible.\\
  {\bf Second situation.}
 There exists $\la>0$ with $d\la=0$,  a basis $(e,\bar{e})$ of $\mathfrak{h}$ such that $\langle e,e\rangle=\langle \bar{e},\bar{e}\rangle=0$, $\langle e,\bar{e}\rangle=1$ and, for any $u\in\G$, 
 $$ u.e=\la(u){e}\esp u.\bar{e}=-\la(u)\bar{e}. $$
  We have 
 $ [e,\bar{e}]=-\la(e)\bar{e}-\la(\bar{e}) {e}.$
 Since $\la([e,\bar{e}])=0$, we get $\la(e)\la(\bar{e})=0.$ We can suppose without loss of generality that $\la(e)=0$. So
 \[ e.e=e.\bar{e}=0,\; \bar{e}.e=\la(\bar{e})e\esp \bar{e}.\bar{e}=-\la(\bar{e})\bar{e}. \]
 So $\mathbf{h}=\la(\bar{e}) e$ and hence $\la(\bar{e})\not=0$.  Thus $\mathrm{L}_{\mathbf{h}}=0$. This shows that $H$ is a two-sided ideal. Moreover, for any $u\in\G$, from
 \[ (\bar{e}.u).\bar{e}-\bar{e}.(u.\bar{e})=(u.\bar{e}).\bar{e}-u.(\bar{e}.\bar{e}), \]
 we get
 \[ \la(\bar{e}.u)=-\la(u)\la(\bar{e}). \]
 If $u\in\mathfrak{k}_0$, we get $\la(u)=0$. For $i=1,\ldots,p$, by replacing $u$ by $s_i$ and $t_i$ respectively, we get
 \[ \mu_i(\bar{e})\la(t_i)+\la(s_i)\la(\bar{e})=0\esp
 \la(t_i)\la(\bar{e})-\mu_i(\bar{e})\la(s_i)=0.  \]
 Since $\la(\bar{e})\not=0$ we get $\la(t_i)=\la(s_i)=0$. Now, $H^\perp=\R e\oplus \mathfrak{k}$ and hence $H^\perp.H=0$. This shows that
  $H^\perp$ is also a two-sided ideal.\\
   {\bf Third situation.} In this case $\dim\mathfrak{h}\geq3$ and 
 there exists $\la\in\G^*$ such that $d\la=0$ and $\mathfrak{h}_\la$ is a totally isotropic one dimensional vector space. Moreover, for any $\mu\not=\la$, $\mathfrak{h}_\mu=\left\{0\right\}$. Choose a generator $e$ of $\mathfrak{h}_\la$. 
 Let  $\pi:\mathfrak{h}_\la^\perp\too \wi{\mathfrak{h}}$ the canonical projection where 
 $\wi{\mathfrak{h}}$ is the quotient of $\mathfrak{h}_\la^\perp$ by $\mathfrak{h}_\la$. The representation $\mathrm{L}$ induces a representation $\wi{\mathrm{L}}$ of $\G$ on $\wi{\mathfrak{h}}$. So, according to Corollary \ref{co1},
 \[ \wi{\mathfrak{h}}=\bigoplus_{i=1}^q\wi{\mathfrak{h}}_i\oplus \wi{\mathfrak{h}}_0. \]
 Put $E_i=\pi^{-1}(\wi{\mathfrak{h}}_i)$ for $i=0,\ldots,q$. For any $x\in E_0$ there exists $a_x\in\G^*$ satisfying \eqref{ax} such that, for any $u\in\G$,
 \[ u.x=a_x(u)e. \]
   We have clearly   
 \begin{equation}\label{h} \langle \mathbf{h},e\rangle=-\la(e). \end{equation}
 We distinguish two cases.
 
 {\bf First case: $\la(e)\not=0$.} We will show that this case is impossible.
 Put $\bar{e}=-\la(e)^{-1}\mathbf{h}$. For any $i=1,\ldots,q$, there exists $\la_i>0$,  $b_i,c_i\in\G^*$ satisfying \eqref{bci} and a basis $(e,e_i,f_i)$ of $E_i$ such that $(e_i,f_i)$ is orthonormal and for any $u\in\G$,
 \[ u.e_i=b_i(u)e+\la_i(u)f_i\esp u.f_i=c_i(u)e-\la_i(u)e_i. \]
 Moreover, according to \eqref{changebc}, we can choose $b_i$ and $c_i$ such that $b_i(e)=c_i(e)=0$.\\ Fix $i=1,\ldots,q$. By using the flatness of the metric, we will show that $b_i=c_i=0$. Indeed,
  for any $u\in\G$.
 \begin{eqnarray*}
  (e.u).e-e.(u.e)&=&(u.e).e-u.(e.e).
  \end{eqnarray*}So
  \begin{equation}\label{levi1}
  \la(e.u)=\la(u)\la(e).\end{equation}
 Also, for any $x\in E_0$,
  \begin{eqnarray*}
  (x.u).e-x.(u.e)&=&(u.x).e-u.(x.e).
  \end{eqnarray*}So
  \begin{equation}\label{levi2} \la(x.u)=a_x(u)\la(e). \end{equation}
 Finally, 
 \begin{eqnarray*}
 (u.e).e_i-u.(e.e_i)&=&(e.u).e_i-e.(u.e_i)\\
 (u.e).f_i-u.(e.f_i)&=&(e.u).f_i-e.(u.f_i),
 \end{eqnarray*}then
 \begin{equation}\label{eqe}
 \left\{\begin{array}{lll} 
 \la_i(e.u)&=&\la(u)\la_i(e),\\
 b_i(e.u)&=&\la(e)b_i(u) -\la_i(e)c_i(u),\\
 c_i(e.u)&=&\la_i(e)b_i(u)+\la(e)c_i(u).\end{array}
     \right.
 \end{equation}
 By taking $u\in E_0\oplus \mathfrak{k}_0$ in \eqref{eqe},
  we get
   \[ b_i(u)\la(e)-\la_i(e)c_i(u)=0\esp  \la_i(e)b_i(u)+c_i(u)\la(e)=0. \]
 So $b_i(u)=c_i(u)=0$.

 Let $j=1,\ldots,q$. By taking $u=e_j$ or $u=f_j$ in \eqref{eqe}, we get
  \[ \left( \begin{array}{cccc}
 \la(e)&-\la_j(e)&-\la_i(e)&0\\
 \la_j(e)&\la(e)&0&-\la_i(e)\\
 \la_i(e)&0&\la(e)&-\la_j(e)\\
 0&\la_i(e)&\la_j(e)&\la(e)
 \end{array} \right) 
 \left( \begin{array}{c}b_i(e_j)\\b_i(f_j)\\c_i(e_j)\\c_i(f_j)
 \end{array} \right)=0.\]
 Since $\la(e)\not=0$ we get $b_i(e_j)=b_i(f_j)=c_i(e_j)=c_i(f_j)=0$. In the same way, we can show that $b_i(s_j)=b_i(t_j)=c_i(s_j)=c_i(t_j)=0$. Recall that $(s_i,t_i)$ are defined in \eqref{st}.\\
 On the other hand,
 $ e.\bar{e}=-\la(e)\bar{e}+x_0,$ where $x_0\in\mathfrak{h}_0$, so
 \[ b_i(\bar{e})\la(e)-\la_i(e)c_i(\bar{e})=-\la(e)b_i(\bar{e})\esp  \la_i(e)b_i(\bar{e})+c_i(\bar{e})\la(e)=-\la(e)c_i(\bar{e}). \]
 So $b_i(\bar{e})=c_i(\bar{e})=0$.
  Finally, for any $i=1,\ldots,q$, $b_i=c_i=0$ and hence $\mathrm{span}\{e_i,f_i\}$ is a nondegenerate invariant subspace. Since $\mathfrak{h}$ is indecomposable then   $\mathfrak{h}=E_0\oplus\R \mathbf{h}$ and $E_0=\R e\oplus F_0$ such that $\la(F_0)=0$.
  For any $x\in F_0$, by taking $u=e$ in \eqref{levi2} we get $a_x(e)=0$. By applying $da_y$ to $(e,x)$ and
 by using \eqref{ax} we get, for any $x,y\in F_0$,
  $ 0=\la(e)a_y(x)$ and hence 
  $a_y(x)=0$. By taking $u\in \mathfrak{k}_0$ in \eqref{levi2} we get $a_x(u)=0$. For any $i=1,\ldots,p$, by  taking $u=s_i$ or $u=t_i$ we get $a_x(s_i)=a_x(t_i)=0$. Now, $\mathbf{h}\in[\G,\G]^\perp$ and hence $\mathrm{R}_{\mathbf{h}}$ is symmetric, so
 \[ 0=\langle x.\mathbf{h},\mathbf{h}\rangle=\langle x,\mathbf{h}.\mathbf{h}\rangle=-\langle \mathbf{h}.x,\mathbf{h}\rangle
 \stackrel{\eqref{h}}=a_x(\mathbf{h})\la(e)=0. \]So $a_x(\mathbf{h})=0$. Thus $a_x=0$ and  since $\mathfrak{h}$ is indecomposable then $\mathfrak{h}=\mathrm{span}\left\{ e,\bar{e}\right\}$ which is impossible.

{\bf Second case: $\la(e)=0$.} In this case, by virtue of \eqref{h} and since $\langle \mathbf{h},\mathbf{h}\rangle=0$, $\mathbf{h}=\al e$ and hence $e\in[\G,\G]$ and $\mathrm{R}_e$ is symmetric. So for any $u,v\in\G$,
 \[ \la(u)\langle e,v\rangle=\la(v)\langle e,u\rangle. \]
 So if $u\in\mathfrak{h}_\la^\perp\oplus\mathfrak{k}$ then $\la(u)=0$ and hence $\ker\la=
 \mathfrak{h}_\la^\perp\oplus\mathfrak{k}$. Since $e\in[\G,\G]$ then for any $i=1,\ldots,q$, $\la_i(e)=0$. Fix $i=1,\ldots,q$ and choose a basis $(e,e_i,f_i)$ of $E_i$ such that for any
 $u\in\G$,
 \[ u.e=\la(u)e,\; u.e_i=b_i(u)e+\la_i(u)f_i\esp u.f_i=c_i(u)e-\la_i(u)e_i. \] 
 So \begin{eqnarray*}
 \;[e,e_i]&=&b_i(e)e,\\
 \;[e,f_i]&=&c_i(e)e,\\
 \;[e_i,f_i]&=&(c_i(e_i)-b_i(f_i))e-\la_i(e_i)e_i-\la_i(f_i)f_i.\end{eqnarray*}
 Since $d\la_i=0$, by applying $\la_i$ to $[e_i,f_i]$, we get $\la_i(e_i)=\la_i(f_i)=0$. 
 By applying $db_i$ and $dc_i$ to $[e,e_i]$ and $[e,f_i]$ and by using \eqref{bci}, we get
 \[ b_i(e)=c_i(e)=0. \]
On the other hand, for any $x\in E_0$, we have $[e,x]=a_x(e)e$. So by applying $da_x$ and by using \eqref{ax} we get $a_x(e)=0$. So $e.\mathfrak{h}_\la^\perp=0$. Now, for any $u\in\mathfrak{h}\setminus\mathfrak{h}_\la^\perp$, $\langle e.u,u\rangle=0$ and, for any $v\in\mathfrak{h}_0$, $\langle e.u,v\rangle=-\langle e.v,u\rangle=0$ and hence $e.u=0$.  Finally, $\mathrm{L}_{\mathbf{h}}=0$. This shows that $H$ is a two-sided ideal and because $H^\perp.H=0$ then $H^\perp$ is also a two-sided ideal. This achieves the proof of the theorem's first assertion.\\
 Since  both $H$ and $H^\perp$ are two-sided ideals, according to Proposition 3.1 of \cite{Aub-Med},  
$\G$  is  a double extension of a Riemannian flat Lie algebra $B$ according to
$(\xi,D,\mu,b_0)$. The Lie bracket is given by
$$[\bar{e},e]=\mu e,\; [\bar{e},a]=D(a)-\langle
b_0,a\rangle_0e\esp[a,b]=[a,b]_0+\langle(\xi-\xi^*)(a),b\rangle_0e.$$
It is easy to see that $\mathbf{h}=(\mu+\tr(D))e$. So $\G$ is nonunimodular iff $\tr(D)\not=-\mu$.
\hfill$\square$

\section{Nonunimodular Lorentzian flat Lie algebras up to dimension 4}\label{section5}
According to theorem \ref{main}, one can determine entirely all nonunimodular Lorentzian flat Lie algebras if one can find all admissible $(\xi,D,\mu,b_0)$ on Riemannian flat Lie algebras with $\tr(D)\not=-\mu$. In this section, we give all families of non unimodular Lorentzian flat Lie algebras up to dimension 4 which are not isomorphic.\\
Not first that in the 2-dimensional case, there is one nonunimodular Lorentzian flat Lie algebra $(\G_2,\prs)$ such that $\G_2=span\{e,\bar{e}\}$ and $[\bar{e},e]=\mu e$ with $\mu\neq0$ and $\prs_{\{e,\bar{e}\}}=\left(\begin{array}{cc}0&1\\1&0\end{array}\right)$.
\begin{pr}
Let $(\G,\prs)$ be a nonunimodular Lorentzian flat Lie algebra of dimension 3. Then $(\G,\prs)$ is isomorphic to one of the following:
\begin{enumerate}
\item[$(i)$] $\G_3=span\{e,\bar{e},e_1\}$ where the only non vanishing brackets are 
$$[\bar{e},e]=\mu e\ ,\ [\bar{e},e_1]=\alpha e\ ,\ \mbox{with }\mu\neq0\mbox{ and }\alpha\in\mathbb{R}.$$ 
\item[$(ii)$] $\G'_3=span\{e,\bar{e},e_1\}$ where the only non vanishing brackets are 
$$[\bar{e},e]=\mu e\ ,\ [\bar{e},e_1]=\mu e_1+\alpha e\ ,\ \mbox{with }\mu\neq0\mbox{ and }\alpha\in\mathbb{R}.$$
\end{enumerate}
In both cases, the metric is given by
$$\prs_{\{e,\bar{e},e_1\}}=\left(\begin{array}{ccc}0&1&0\\1&0&0\\0&0&1\end{array}\right)$$
\end{pr}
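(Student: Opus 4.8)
The plan is to apply Theorem \ref{main} and then observe that the ``base'' of the double extension must be one-dimensional, which collapses all the admissibility conditions to a short list. Let $(\G,\prs)$ be a nonunimodular Lorentzian flat Lie algebra with $\dim\G=3$. By Theorem \ref{main}, $(\G,\prs)$ is a double extension of a Riemannian flat Lie algebra $(B,[\;,\;]_0,\prs_0)$ according to some admissible $(\xi,D,\mu,b_0)$ with $\tr(D)\neq-\mu$; since a double extension of $B$ has dimension $\dim B+2$, we must have $\dim B=1$. Hence, after rescaling, $B=\R e_1$ with $\langle e_1,e_1\rangle_0=1$, $B$ is abelian, and Koszul's formula forces the Levi-Civita product of $B$ to vanish identically, so $\mathrm{L}_a=\mathrm{R}_a=0$ for all $a\in B$; in particular $\mathrm{R}_{b_0}=0$.

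Next I would run through the admissibility conditions \eqref{eq3}--\eqref{eq6}. Write $\xi(e_1)=\xi_0e_1$, $D(e_1)=d_0e_1$ and $b_0=\beta e_1$ with $\xi_0,d_0,\beta\in\R$. Condition \eqref{eq3} is vacuous because $[\;,\;]_0=0$; skew-symmetry of $D-\xi$ on the one-dimensional Euclidean space $B$ gives $d_0=\xi_0$, i.e. $D=\xi$; condition \eqref{eq6} is vacuous because every product in $B$ vanishes; and condition \eqref{eq5}, using $[D,\xi]=0$ and $\mathrm{R}_{b_0}=0$, reduces to $\xi_0^2-\mu\xi_0=0$, so $\xi_0\in\{0,\mu\}$. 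Moreover the proof of Theorem \ref{main} gives $\mathbf{h}=(\mu+\tr(D))e=(\mu+\xi_0)e$, so nonunimodularity means $\mu+\xi_0\neq0$; together with $\xi_0\in\{0,\mu\}$ this forces $\mu\neq0$ in both cases.

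It then remains to write out the bracket \eqref{bracket} in each case. For $\xi_0=0$ (so $D=\xi=0$) the only nonzero brackets are $[\bar{e},e]=\mu e$ and $[\bar{e},e_1]=-\beta e$; for $\xi_0=\mu$ (so $D=\xi=\mu\,\mathrm{Id}$, which is symmetric, hence $\xi-\xi^*=0$) the only nonzero brackets are $[\bar{e},e]=\mu e$ and $[\bar{e},e_1]=\mu e_1-\beta e$. Setting $\alpha=-\beta$ yields exactly $\G_3$ and $\G'_3$, and the stated Gram matrix comes directly from the definition of $\prs$ on a double extension. Finally, $\G_3$ and $\G'_3$ are not isomorphic because $\dim[\G_3,\G_3]=1$ while, using $\mu\neq0$, $[\G'_3,\G'_3]=\spa\{e,e_1\}$ has dimension $2$; any residual basis rescaling compatible with $\prs$ acts within a single family, so the two families exhaust the nonunimodular Lorentzian flat Lie algebras of dimension $3$. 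I do not expect a genuine obstacle here: the only mildly delicate points are checking that the four admissibility conditions really do collapse as claimed and keeping the nonunimodularity constraint straight, while the flatness of the resulting algebras is automatic from the double extension construction.
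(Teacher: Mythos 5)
Your proposal is correct and follows essentially the same route as the paper: invoke Theorem \ref{main}, note that the base $B$ of the double extension is one-dimensional (hence abelian with trivial Levi--Civita product), reduce the admissibility conditions to $D=\xi=0$ or $D=\xi=\mu\,\mathrm{Id}_B$, and distinguish the two resulting families by the dimension of the derived ideal. The paper leaves the collapse of conditions \eqref{eq3}--\eqref{eq6} as ``easy to show''; you have simply written out those details, and they check out.
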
 

\begin{proof}
It is easy to show that $(\xi,D,\mu,b_0)$ is admissible in a one dimensional Riemannian Lie algebra $B=\mathbb{R}e_1$ if and only if $D=\xi=0$ or $D=\xi=\mu Id_B$. Put $b_0=\alpha e_1$, and by using \eqref{bracket} we show that $\G$ is isomorphic to $\G_3$ or $\G'_3$. Since $\dim [\G_3,\G_3]\neq\dim [\G'_3,\G'_3]$, then $\G_3$ and $\G'_3$ are not isomorphic.
\end{proof}

\begin{pr}
Let $(\G,\prs)$ be a nonunimodular Lorentzian flat Lie algebra of dimension 4. Then $(\G,\prs)$ is isomorphic to one of the following:
\begin{enumerate}
\item[$(i)$] $\G_4=span\{e,\bar{e},e_1,e_2\}$ where the only non vanishing brackets are 
$$[\bar{e},e]=\mu e\ ,\ [\bar{e},e_1]=\mu e_1+\lambda e_2+\alpha e\ ,\ [\bar{e},e_2]=\beta e\ ,\ [e_1,e_2]=\lambda e$$ with $\mu\neq0$ and $\alpha,\beta,\lambda\in\mathbb{R}.$ 
\item[$(ii)$] $\G'_4=span\{e,\bar{e},e_1,e_2\}$ where the only non vanishing brackets are 
$$[\bar{e},e]=\mu e\ ,\ [\bar{e},e_1]=\e\mu e_1+\gamma e_2+\alpha e\ ,\ [\bar{e},e_2]=-\gamma e_1+\e\mu e_2+\beta e,$$ with $\e=0$ or $1$, $\mu\neq0$ and $\alpha,\beta,\gamma\in\mathbb{R}.$
\end{enumerate}
In both cases, the metric is given by
$$\prs_{\{e,\bar{e},e_1,e_2\}}=\left(\begin{array}{cccc}0&1&0&0\\1&0&0&0\\0&0&1&0\\0&0&0&1\end{array}\right)$$
\end{pr}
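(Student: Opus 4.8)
The plan is to apply Theorem~\ref{main}: every nonunimodular Lorentzian flat Lie algebra of dimension $4$ arises as a double extension of a $2$-dimensional Riemannian flat Lie algebra $B$ by some admissible datum $(\xi,D,\mu,b_0)$ with $\tr(D)\neq-\mu$. Since a $2$-dimensional Riemannian flat Lie algebra is automatically abelian (this follows from Milnor's description, or from the structure result $B=S(B)\oplus Z(B)\oplus[B,B]$ with $\dim[B,B]$ even, which forces $[B,B]=0$ here), we have $B=\R e_1\oplus\R e_2$ with trivial Levi-Civita product, $\prs_0$ the standard Euclidean inner product. Then condition~\eqref{eq3} says $\xi$ is a $1$-cocycle for the trivial representation, hence merely linear (automatic); \eqref{eq6} reads $D(a)\cdot b+a\cdot D(b)-D(a\cdot b)=a\cdot\xi(b)-\xi(a\cdot b)$ which is $0=0$ since all products $a\cdot b$ vanish; condition $D-\xi$ skew-symmetric and \eqref{eq5} $[D,\xi]=\xi^2-\mu\xi-\mathrm{R}_{b_0}$, with $\mathrm{R}_{b_0}=0$, are the only real constraints. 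So the task reduces to classifying pairs of $2\times2$ real matrices $(\xi,D)$ with $D-\xi$ skew-symmetric and $[D,\xi]=\xi^2-\mu\xi$, up to the natural change-of-basis action on $B$ by $\mathrm{O}(2)$ (and rescaling), keeping track of $b_0=\alpha e_1+\beta e_2$, and then assembling $\G$ via~\eqref{bracket}.

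First I would set $A=D-\xi\in\mathrm{so}(2)$, so $A=\gamma J$ with $J=\begin{pmatrix}0&-1\\1&0\end{pmatrix}$, and write $D=\xi+\gamma J$. Substituting into \eqref{eq5}: $[\xi+\gamma J,\xi]=\gamma[J,\xi]=\xi^2-\mu\xi$, i.e. $\gamma[J,\xi]=\xi(\xi-\mu\,\mathrm{Id})$. Decompose $\xi=\xi_s+\xi_a$ into symmetric and skew parts; then $[J,\xi]=[J,\xi_s]$ is symmetric traceless, while $\xi(\xi-\mu\mathrm{Id})$ must then also be symmetric, which pins down the interaction between $\xi_s$ and $\xi_a$. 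A short case analysis on whether $\gamma=0$ or $\gamma\neq0$, and on whether $\xi$ is diagonalizable over $\R$, a single Jordan block, or has complex eigenvalues, should produce two families. When $\xi$ has a real eigenbasis one can arrange $\xi=D$ diagonal with eigenvalues in $\{0,\mu\}$ (the one-dimensional-building-block behaviour seen in the dimension-$3$ proposition, applied to each line), up to an extra nilpotent/skew coupling term $\lambda$ linking the two lines; this yields $\G_4$, where $[\bar e,e_1]=\mu e_1+\lambda e_2+\alpha e$ and $[e_1,e_2]=\lambda e$ (the last bracket coming precisely from the $\langle(\xi-\xi^*)(a),b\rangle_0 e$ term in~\eqref{bracket} with $\xi-\xi^*$ of rank $2$). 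When $\xi$ has complex (non-real) eigenvalues, an orthonormal change of basis brings $D$ to the form $\varepsilon\mu\,\mathrm{Id}+\gamma J$ with $\varepsilon\in\{0,1\}$ and $\xi=D$ symmetric-part-wise forced; this gives $\G'_4$.

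Then I would check the metric: in all cases $B$ is Euclidean, $\mathrm{span}\{e,\bar e\}$ is a hyperbolic plane orthogonal to $B$, so $\prs$ is the block matrix displayed in the statement, Lorentzian of signature $(1,3)$. Finally I would verify $\tr(D)\neq-\mu$ in each family (for $\G_4$, $\tr(D)=2\mu$, so $2\mu\neq-\mu$ holds since $\mu\neq0$; for $\G'_4$, $\tr(D)=2\varepsilon\mu$, again $\neq-\mu$), confirming nonunimodularity via $\mathbf h=(\mu+\tr(D))e\neq0$, and dispose of redundancies/isomorphisms among the parameters (e.g. rescaling $\bar e$, rotating $(e_1,e_2)$, sign of $\lambda$ or $\gamma$), noting that $\G_4$ and $\G'_4$ are genuinely distinct — for instance by comparing $\dim[\G,\G]$, the nilradical, or whether $\ad_{\bar e}$ restricted to $B$ has real spectrum.

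The main obstacle I expect is the case analysis solving $\gamma[J,\xi]=\xi(\xi-\mu\,\mathrm{Id})$ together with the requirement that the right-hand side be symmetric: it is easy to miss a family or to double-count one that is equivalent under $\mathrm{O}(2)$. Getting the normal forms of $(\xi,D)$ exactly right — and matching them to the stated brackets, especially tracking how $\xi-\xi^*$ feeds the $[e_1,e_2]$ bracket and how $b_0$ feeds the $\langle b_0,a\rangle_0 e$ terms — is the delicate bookkeeping step; everything else is routine substitution into~\eqref{bracket} and a signature computation.
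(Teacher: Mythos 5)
Your overall strategy is exactly the paper's: invoke Theorem \ref{main} to realize $\G$ as a double extension of a $2$-dimensional (necessarily abelian) Riemannian flat $B$, observe that conditions \eqref{eq3} and \eqref{eq6} are vacuous and $\mathrm{R}_{b_0}=0$, and reduce everything to classifying pairs $(\xi,D)$ with $A=D-\xi\in\mathrm{so}(2)$ and $[A,\xi]=\xi^2-\mu\xi$, then feed the normal forms and $b_0$ into \eqref{bracket}. That reduction is correct and complete as you state it.

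The gap is in the decisive step: you never actually solve $\gamma[J,\xi]=\xi^2-\mu\xi$, you only predict the outcome, and the predicted case structure is wrong. Since the left-hand side is symmetric and traceless, the right-hand side must be too; symmetry gives $(\lambda_2-\lambda_3)(\tr\xi-\mu)=0$ (the paper's dichotomy), and in the branch $\tr\xi=\mu$ Cayley--Hamilton makes the right-hand side $-\det(\xi)\,\mathrm{Id}$, forcing $\det\xi=0$, while in the other branch $\xi$ is symmetric. In particular \emph{no} admissible $\xi$ ever has non-real eigenvalues, so your case ``$\xi$ has complex eigenvalues $\Rightarrow$ $\G'_4$'' is empty, and the $\G'_4$ family in fact arises from $\xi\in\{0,\mu\,\mathrm{Id}\}$ with $D=\xi+\gamma J$ --- it is $D$, not $\xi$, that acquires complex eigenvalues. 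Your other case (``$\xi$ real eigenbasis $\Rightarrow$ $\G_4$'') is also misassigned, since $\xi=0$ or $\mu\,\mathrm{Id}$ has a real eigenbasis yet yields $\G'_4$. So the taxonomy you propose would not, as written, sort the solutions into the two stated families; it must be replaced by the symmetry/trace analysis above (which then gives $\xi=D=\bigl(\begin{smallmatrix}\mu&0\\\lambda&0\end{smallmatrix}\bigr)$ for $\G_4$ and $\xi=\varepsilon\mu\,\mathrm{Id}$, $D=\xi+\gamma J$ for $\G'_4$). A further small slip: for $\G_4$ one has $\tr(D)=\mu$, not $2\mu$ (you are conflating it with $\tr(\ad_{\bar e})=\mu+\tr D$); the conclusion $\tr(D)\neq-\mu$ still holds since $\mu\neq0$. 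The concluding checks (signature, nonisomorphy via $\dim[\G,\G]$) match the paper.
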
 
 \begin{proof}
 Any Riemannian flat Lie algebra $(B,\prs)$ of dimension 2 must be abelian, then $(D,\xi,\mu,b_0)$ is admissible if and only if $A=D-\xi$ is skew-symmetric and 
 \begin{equation}\label{axi}
 [A,\xi]=\xi^2-\mu\xi.
 \end{equation}
 Put $$A=\left(\begin{array}{cc}0&\gamma\\-\gamma&0\end{array}\right)\esp \xi=\left(\begin{array}{cc}\lambda_1&\lambda_2\\\lambda_3&\lambda_4\end{array}\right),$$
 then from \eqref{axi} we deduce that $(\lambda_2-\lambda_3)(\lambda_1+\lambda_4-\mu)=0.$
 \begin{itemize}
 \item If $\lambda_1+\lambda_4=\mu$ then $A=0$. Indeed, if $\gamma\neq0$ then by using \eqref{axi}, we get $\lambda_1=\lambda_4=\mu=0$, but $\tr(D)=\tr(\xi)=\lambda_1+\lambda_4\neq-\mu.$\\
 Then \eqref{axi} is equivalent to $A=0$ and $\det \xi=0$. Thus $(A=\xi=0)$ or $(A=0\mbox{ and } \xi=\left(\begin{array}{cc}\mu&0\\\lambda&0\end{array}\right)\mbox{ where } \mu\neq0 \mbox{ and } \lambda\in\mathbb{R})$.\\
 Put $b_0=\alpha e_1+\beta e_2$, then by using \eqref{bracket}, we see that the second solution corresponds to $\G_4$, and the first solution gives a particular case of $\G'_4$.
 \item If $\lambda_1+\lambda_4\neq\mu$ then $\lambda_2=\lambda_3$ which implies that $\xi$ is symmetric. Then there exists an orthonormal basis of $B$ such that 
 $$A=\left(\begin{array}{cc}0&\gamma\\-\gamma&0\end{array}\right)\esp \xi=\left(\begin{array}{cc}\lambda_1&0\\0&\lambda_2\end{array}\right),$$
 then the solutions of \eqref{axi} are:
 \begin{enumerate}
 \item $A$ is skew-symmetric and $\xi=\mu Id_B$ or $\xi=0$ which corresponds to $\G'_4$.
 \item $A$ is skew-symmetric and $\xi=\left(\begin{array}{cc}\mu&0\\0&0\end{array}\right)$ which corresponds to a particular case of $\G_4$.
 \end{enumerate}
 \end{itemize} 
 Finally, $\G_4$ and $\G'_4$ are not isomorphic because $\dim [\G_4,\G_4]=2$ and $\dim [\G'_4,\G'_4]=1$ or $3$.
 \end{proof}
 
\bibliographystyle{elsarticle-num}

\end{document}